\title[Surfaces polyédrales tendues et nombre chromatique %
relatif]{Plongements polyédraux tendus et nombre chromatique relatif des 
surfaces à bord}
\author{Pierre Jammes}
\address{Univ. Nice Sophia Antipolis, CNRS, LJAD, UMR 7351\\
06100 Nice France}
\email{pjammes@unice.fr}
\date{}
\begin{document}
\begin{abstract}
Le nombre chromatique relatif $c_0(S)$ d'une surface compacte $S$ à bord
est défini comme la borne supérieure des nombres chromatiques des graphes
plongés dans $S$ avec tous leurs sommets sur $\partial S$. Cet invariant
topologique a été introduit pour l'étude de la multiplicité de la première
valeur propre de Steklov sur $S$. Dans cet article, on montre que $c_0(S)$
est aussi pertinent pour l'étude des plongements polyédraux tendus de $S$
en établissant deux résultats.
Le premier est que s'il existe un plongement polyédral tendu de $S$
dans $\R^n$ qui n'est pas contenu dans un hyperplan, alors $n\leq c_0(S)-1$.
Le second est que cette inégalité est optimale pour les surfaces
de petit genre.
\end{abstract}
\begin{altabstract}
The relative chromatic number $c_0(S)$ of a compact surface $S$ with 
boundary is defined as the supremum of the chromatic numbers of graphs 
embedded in $S$ with all vertices on $\partial S$. This topological 
invariant was introduced for the study of the multiplicity of the 
first Steklov eigenvalue of $S$. In this article, we show that $c_0(S)$ is
also relevant for the study of tight polyhedral embeddings of $S$ by
proving two results. 
The first one is that if there is a tight polyhedral embedding of $S$ 
in $\R^n$ which is not contained in a hyperplane, then $n\leq c_0(S)-1$.
The second result is that this inequality is sharp for surfaces of small genus.
\end{altabstract}
\keywords{surfaces polyédrales, nombre chromatique, plongements tendus}
\altkeywords{polyheral surfaces, chromatic number, tight embeddings}
\subjclass{52B70, 57Q35, 53A05, 55A15, 57M15, 05C10}

\maketitle

\section{Introduction}

Le problème de Heawood consiste à déterminer le nombre chromatique
des surfaces closes de genre non nul, c'est-à-dire la borne supérieure 
des nombres
chromatiques des graphes qu'on peut plonger dans cette surface. Son étude
a été amorcée par P.~J.~Heawood \cite{he90} et L.~Heffter \cite{he91},
et sa résolution achevée trois quart de siècle plus tard par G.~Ringel et 
J.~Youngs \cite{ry68}: la valeur du nombre chromatique d'une surface
$S$ de caractéristique d'Euler $\chi$ est $c(S)=\lfloor
\frac{7+\sqrt{49-24\chi}}2\rfloor$, sauf pour la bouteille de Klein
dont le nombre chromatique vaut~6. On peut consulter~\cite{ri74} pour la résolution
complète du problème et un survol historique. 

 Ce nombre chromatique joue un 
rôle dans deux problèmes qui semblent \emph{a priori} sans rapport. 
Le premier concerne les plongements polyédraux substantiels tendus de $S$ 
dans $\R^n$ (voir ci-dessous la définition~\ref{intro:tendu} d'un plongement
tendu ; un plongement dans $\R^n$ est dit \emph{substantiel} si son
image n'est pas contenue dans un hyperplan affine ou, de manière équivalente,
si le sous-espace affine engendré par l'image du plongement est $\R^n$). Il
découle des travaux 
de T.~F.~Banchoff \cite{ba65} et W.~Kühnel \cite{ku80} que la valeur 
maximale de la dimension $n$ telle qu'un tel plongement
existe est $c(S)-1$.

 Le second problème touche au spectre des opérateurs
de Schrödinger sur les surfaces. Y.~Colin de Verdière a conjecturé dans 
\cite{cdv87} que la multiplicité de la deuxième valeur propre 
d'un opérateur de Schrödinger sur $S$ est majorée par $c(S)-1$, et il
montre qu'une telle majoration serait optimale.
Grâce aux travaux successifs de S.~Y.~Cheng \cite{ch76}, G.~Besson 
\cite{be80}, Y.~Colin de Verdière \cite{cdv87} et B.~Sévennec \cite{se94} 
\cite{se02}, cette conjecture est démontrée pour les surfaces closes 
de caractéristique d'Euler supérieure ou égale à -3.

 Récemment, ce problème de multiplicité de valeurs propres a été étudié
pour le spectre de Steklov, qui est le spectre d'opérateurs de type
Dirichlet-Neumann sur les variétés à bord (voir \cite{fs12}, \cite{ja14}, 
\cite{kkp12}). Dans \cite{ja14}, la conjecture de 
Colin de Verdière est étendue au spectre de Steklov en remplaçant le 
nombre chromatique par un autre invariant, spécifique aux surfaces à bord
et baptisé \emph{nombre chromatique relatif} dans \cite{ja13} (voir
la définition~\ref{intro:def2} ci-dessous). Le but du présent article est
de montrer que ce nouvel invariant est pertinent pour l'étude des
plongements polyédraux tendus de surfaces à bord, pour lesquels les
résultats de \cite{ku80} ne sont que partiels.

Rappelons et précisons quelques définitions. Le nombre chromatique
relatif diffère du nombre chromatique usuel par le fait qu'on impose
aux graphes plongés dans la surface d'avoir tous leurs sommets sur le bord :
\begin{definition}\label{intro:def1}
Soient $G$ un graphe fini et $S$ une surface compacte à bord.
Un plongement de $G$ dans $S$ sera appelé \emph{plongement propre}
si ce plongement envoie tous les sommets de $G$ sur $\partial S$.
\end{definition}
\begin{definition}[\cite{ja13}]\label{intro:def2}
Si $S$ est une surface compacte à bord, on appelle \emph{nombre
chromatique relatif} de $S$, noté $c_0(S)$,
la borne supérieure des nombres chromatiques des graphes finis qui admettent
un plongement propre dans $S$. 
\end{definition}
Contrairement au nombre chromatique usuel, la valeur du nombre chromatique 
relatif n'est connue que pour certaines surfaces de petit genre, ou sous
des hypothèses sur le nombre de composantes connexes du bord en genre 
quelconque. On 
rappelera ce qu'on sait à son sujet au paragraphe~\ref{rappel:relatif}.

Les plongements tendus admettent plusieurs définitions qui ne sont
pas équivalentes pour les surfaces à bord:
\begin{definition}\label{intro:tendu}
Un plongement $\varphi:S\to\R^n$ est $p$-tendu si pour tout demi-espace 
affine ouvert $E$, le morphisme d'homologie $H_p(\varphi(S)\cap E)\to 
H_p(\varphi(S))$ à coefficients dans $\mathbb Z_2$ induit par l'inclusion 
est injectif. Le plongement est tendu s'il est $p$-tendu pour tout $p$.
\end{definition}
On s'intéressera en particulier aux plongements tendus et 0-tendus. Ces
deux priopriétés sont équivalentes pour les surfaces closes (\cite{ku95},
lemme~2.5), mais pas
les surfaces à bord (voir les rappels de la section suivante, en particulier
le lemme~\ref{rappel:lem4}). La propriété dite \emph{TPP} (\emph{two pieces
property}), définie pour une partie de $\R^n$ par le fait que tout hyperplan
affine la sépare en au plus deux parties connexes, est équivalente
pour l'image du plongement $\varphi$ à la $0$-tension.
\begin{rema}
Bien que ces propriétés soient généralement définies pour
des plongements euclidiens ou affines, elles sont en fait projectives: 
elles sont préservées par changement de carte affine tant que l'image
du plongement ne rencontre pas l'hyperplan à l'infini.
\end{rema}

Ces précisions étant données, on peut énoncer les deux résultats de
cet article. Le premier majore la dimension de l'espace dans lequel
une surface à bord peut se plonger de manière polyédrale, substantielle
et tendue. La borne obtenue est analogue à celle donnée dans \cite{ba65} 
par T.~F.~Banchoff pour les surfaces closes.
\begin{theo}\label{intro:th1}
Si une surface compacte à bord $S$ admet un plongement polyédral
substantiel et tendu dans $\R^n$, alors $n\leq c_0(S)-1$.
\end{theo}

Le second théorème montre que dans l'inégalité précédente, l'égalité 
est atteinte pour des surfaces de petit genre. En l'état actuel des 
connaissances, on ne peut pas espérer
démontrer l'égalité pour toute surface comme dans le cas des surfaces closes
car le résultat de W.~Kühnel s'appuie de manière cruciale sur la
résolution du problème de Heawood (plus précisément sur l'exhibition
de plongement de graphes complets dans les surfaces). Or, ce problème 
n'est pas encore résolu pour le nombre chromatique relatif.

Pour une surface close $S$ donnée, on notera $S_p$ la surface obtenue
en enlevant $p$ disques disjoints à $S$.
 Rappelons que les travaux de W.~Kühnel (\cite{ku80}, théorème~D), 
reformulés à l'aide du nombre 
chromatique relatif (voir le théorème~\ref{rappel:th} de la section
suivante), donnent l'égalité 
dans le théorème~\ref{intro:th1} pour les
surfaces $S_p$ vérifiant $p=1$ ou
$p\geq c(S)/2$, quelle que soit la surface close $S$.
\begin{theo}\label{intro:th2}
Si $S$ est une surface close orientable de genre inférieur ou égal à 2, ou 
non orientable de genre inférieur ou égal à 3, alors pour tout entier 
$p>0$i, $S_p$ admet un plongement polyédral
substantiel et tendu dans $\R^{c_0(S_p)-1}$ dont tous les sommets sont 
sur $\partial S$.
\end{theo}
Pour les surfaces de genre immédiatement supérieur à ceux couverts par
ce théorème, la connaissance du nombre chromatique relatif commence
à être lacunaire.
\begin{rema}
La contrainte que tous les sommets du plongement soient sur le bord
de la surface n'est pas indispensable pour montrer que le 
théorème~\ref{intro:th1} est optimal, et elle n'était pas présente dans 
les travaux de W.~Kühnel (avec entre autres conséquences qu'on ne peut pas
toujours reprendre telles quelles les constructions de Kühnel pour 
démontrer le théorème~\ref{intro:th2}). Elle est toutefois motivée d'une 
part par l'analogie avec la définition~\ref{intro:def1}, et d'autre part par
le fait que sous cette hypothèse les propriétés de tension et 0-tension
sont équivalentes (voir lemme~\ref{rappel:lem4}).
\end{rema}

On commencera par rappeler dans la section~\ref{rappel} quelques résultats 
sur le nombre chromatique relatif et les plongements polyédraux tendus.
La section~\ref{borne} sera consacrée à la démonstration du 
théorème~\ref{intro:th1}. Enfin, le théorème~\ref{intro:th2} fera l'objet
de la section~\ref{cons}.

\section{Rappels}\label{rappel}
\subsection{Le nombre chromatique relatif}\label{rappel:relatif}
On rassemble ici les résultats connus sur le nombre chromatique 
relatif d'une surface compacte à bord. Rappelons qu'on note $c(S)$ 
le nombre chromatique usuel d'une surface close, $c_0(S)$ le nombre
chromatique relatif d'une surface à bord, $\chi(S)$ la caractéristique
d'Euler d'une surface compacte (avec ou sans bord), et que si $S$ est close,
on note $S_p$ la surface à bord obtenue en enlevant $p$~disques disjoints 
à~$S$.

Le théorème qui suit donne les résultats généraux sur $c_0$, notamment
l'encadrement de $c_0(S_p)$ en fonction de $c(S)$.
\begin{theo}[\cite{ja13}, théorème~3.1]\label{rappel:th}
Le nombre chromatique relatif $c_0(S_p)$ est une
fonction croissante de $p$ et vérifie les inégalités
\begin{equation}
c(S)-1\leq c_0(S_p)\leq
\inf\left(c(S),\frac{5+\sqrt{25-24\chi(S)+24p}}2\right).
\end{equation}
En outre,  $c_0(S_p)$ est le nombre de sommets du plus
grand graphe complet proprement plongeable dans $S_p$, et on a 
$c_0(S_1)=c(S)-1$ et
$c_0(S_p)=c(S)$ si 
$p\geq(c(S)-1)/2$.
\end{theo}
Dans \cite{ja13} sont aussi calculées les valeurs particulières du
nombre chromatique relatif sur les surfaces de petit genre.
Les tables~\ref{rappel:table1} et~\ref{rappel:table2} ci-dessous rassemblent 
les valeurs connues du nombre chromatique relatif (le numéro de la ligne
correspondant au nombre $p$ de composantes de bord). 

\begin{table}[h]
\begin{center}
\renewcommand{\arraystretch}{1.5}
\begin{tabular}{r||c|c|c|c|c|}
&$\mathbb S^2$&$\mathbb T^2$&$\mathbb T^2\#\mathbb T^2$&$\#3\mathbb T^2$&
$\#4\mathbb T^2$\\\hline
1 & 3 & 6 & 7 & 8 & 9\\\hline
2 & 4 & 6 & 8 & ? & 9\\\hline
3 & 4 & 7 & 8 & 9 & 10\\\hline
4 & 4 & 7 & 8 & 9 & 10 \\\hline
\end{tabular}
\bigskip
\caption{Nombre chromatique relatif des surfaces orientables
de petit genre}
\label{rappel:table1}
\end{center}
\end{table}
\begin{table}[h]
\begin{center}
\renewcommand{\arraystretch}{1.5}
\begin{tabular}{r||c|c|c|c|c|c|c|c|c|c|}
&$\mathbb P^2$&$\mathbb K^2$&$\#3\mathbb P^2$&$\#4\mathbb P^2$&
$\#5\mathbb P^2$&$\#6\mathbb P^2$&$\#7\mathbb P^2$&$\#8\mathbb P^2$&
$\#9\mathbb P^2$\\\hline
1 & 5 & 5 & 6 & 7 & 8 & 8 & 9 & 9 & 9\\\hline
2 & 5 & 6 & 7 & ? & 8 & ? & 9 & ? & ?\\\hline
3 & 6 & 6 & 7 & 8 & 9 & 9 & 9 & ? & 10 \\\hline
4 & 6 & 6 & 7 & 8 & 9 & 9 & 10 & 10 & 10 \\\hline
\end{tabular}
\renewcommand{\arraystretch}{1}
\bigskip
\caption{Nombre chromatique relatif des surfaces non orientables
de petit genre}
\label{rappel:table2}
\end{center}
\end{table}

\subsection{Quelques propriétés des plongements polyédraux tendus}
On va rappeler ici divers résultats, établis essentiellement par 
T.~Banchoff et W.~Kühnel, sur les plongements polyédraux tendus 
de surfaces. Certains concernent aussi des graphes, pour lesquels la
propriété de 0-tension (ou de manière équivalente la \emph{TPP}) est bien
définie ; cette propriété implique en particulier que l'image de chaque arête 
du graphe par un plongement 0-tendu est un segment.
 
La démonstration des cinq lemmes qui suivent est rappelée dans 
le chapitre~2 de~\cite{ku95}. Pour ne pas alourdir les énoncés, on
identifiera les surfaces et les graphes à leur plongement.

Les deux premiers lemmes lient la 0-tension d'une surface polyédrale
à un graphe, le squelette de son enveloppe convexe dans le premier et
son propre squelette dans le second.
\begin{lemm}[\cite{ba65}, lemme~3.1]\label{rappel:lem1}
Une surface polyédrale 0-tendue contient le 1-squelette de son enveloppe
convexe. 
\end{lemm}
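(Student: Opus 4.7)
Le plan est de se ramener à montrer que toute arête de l'enveloppe convexe $K := \mathrm{conv}(\varphi(S))$ est contenue dans $\varphi(S)$~: en effet, les sommets de $K$ sont automatiquement dans $\varphi(S)$, étant des points extrémaux de l'ensemble fini des sommets de l'image polyédrale. On fixe donc une arête $e=[v,w]$ de $K$ et on cherche à établir $e\subset \varphi(S)$ en exploitant la 0-tension.

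Puisque $e$ est une $1$-face du polytope $K$, il existe une forme linéaire $\lambda\colon\R^n\to\R$ dont le maximum sur $K$ est atteint exactement sur $e$. Pour $c$ légèrement inférieur à $\max_K\lambda$, le demi-espace ouvert $E_c=\{\lambda>c\}$ découpe dans $K$ une calotte mince qui tend, au sens de Hausdorff, vers $e$ lorsque $c\uparrow \max_K\lambda$. Par la TPP appliquée à $E_c$ (et par connexité de $S$), l'ensemble $\varphi(S)\cap E_c$ est connexe, et il contient à la fois $v$ et $w$.

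On projette ensuite sur la droite $\ell$ portant $e$ via la projection orthogonale $\pi$. L'image $\pi(\varphi(S)\cap E_c)$ est une partie connexe de $\ell$ contenant $\pi(v)$ et $\pi(w)$, donc elle contient tout le segment $e$. Ainsi, pour chaque $p\in e$, il existe un point $x^c_p\in\varphi(S)\cap E_c$ avec $\pi(x^c_p)=p$. Lorsque $c\uparrow\max_K\lambda$, le point $x^c_p$ reste confiné à $K\cap E_c$, qui converge vers $e$, tout en se projetant sur $p$~; toute valeur d'adhérence de $(x^c_p)$ est donc nécessairement $p$ lui-même. Comme $\varphi(S)$ est fermée dans $\R^n$, on en déduit $p\in\varphi(S)$, puis $e\subset\varphi(S)$.

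Le point le plus délicat est la convergence au sens de Hausdorff $K\cap E_c\to e$ et sa compatibilité avec $\pi$~: c'est un énoncé standard pour un convexe compact et une forme linéaire atteignant son maximum sur une seule face, mais il faut vérifier que $K\cap E_c$ reste dans un voisinage arbitrairement petit de $e$ dès que $c$ est assez proche de $\max_K\lambda$. Une fois ceci acquis, le reste de l'argument découle directement de la TPP et de la fermeture de $\varphi(S)$, sans que la structure polyédrale fine de la surface n'intervienne au-delà du fait que $K$ est un polytope.
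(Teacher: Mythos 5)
Votre démonstration est correcte et suit essentiellement l'argument classique de Banchoff, repris au chapitre~2 de \cite{ku95}, auquel l'article renvoie sans reproduire la preuve : les sommets de l'enveloppe convexe sont parmi les sommets du plongement, puis on rase une arête $e$ par des demi-espaces ouverts $E_c=\{\lambda>c\}$, on utilise la connexité de $\varphi(S)\cap E_c$ donnée par la TPP et on passe à la limite. Le seul point que vous signalez comme délicat (la convergence de Hausdorff de $K\cap E_c$ vers la face $e$) se vérifie par compacité de $K$ et par le fait que $e$ est exactement l'ensemble des maximiseurs de $\lambda$ sur $K$ ; il n'y a donc pas de lacune.
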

\begin{lemm}[\cite{ku80}, lemme~1]\label{rappel:lem2}
Une surface polyédrale à faces convexes est 0-tendue si et seulement si
son 1-squelette est 0-tendu.
\end{lemm}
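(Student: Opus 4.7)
Le plan consiste à démontrer les deux implications en les ramenant à un même énoncé combinatoire : pour tout demi-espace affine ouvert $H^+$, la connexité dans $S^{(1)}\cap H^+$ (où $S^{(1)}$ désigne le $1$-squelette de $S$) de l'ensemble des sommets de $S$ situés dans $H^+$. L'ingrédient central est une observation sur chaque face convexe $F$ : si $F\cap H^+\neq\emptyset$, alors au moins un sommet de $F$ appartient à $H^+$ (sinon $F$, enveloppe convexe de ses sommets, serait inclus dans le fermé complémentaire de $H^+$), et de plus les sommets de $F$ dans $H^+$ forment un arc connexe de $\partial F$, deux sommets consécutifs étant joints par une arête de $F$ entièrement contenue dans $H^+$ par convexité de ce dernier. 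On remarque également que pour toute arête $e$ de $S$, si $e\cap H^+$ est non vide, alors cette intersection contient au moins un sommet de $S$.

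Pour l'implication $S^{(1)}$ $0$-tendu $\Rightarrow$ $S$ $0$-tendu, je prends deux points $p, q \in S\cap H^+$ appartenant à des faces $F$ et $F'$. Par l'observation, $F$ contient un sommet $v\in H^+$, et comme $F\cap H^+$ est convexe, le segment $[p,v]$ y reste. De même, $q$ est relié dans $S\cap H^+$ à un sommet $v'\in H^+$. L'hypothèse fournit alors un chemin de $v$ à $v'$ dans $S^{(1)}\cap H^+\subset S\cap H^+$, d'où la connexité de $S\cap H^+$.

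L'implication directe est plus délicate. Grâce à la remarque sur les arêtes, il suffit de prouver que deux sommets $v, w\in H^+$ sont toujours reliés dans $S^{(1)}\cap H^+$. La $0$-tension de $S$ donne un chemin de $v$ à $w$ dans $S\cap H^+$, que je choisirais traversant une suite finie de faces $F_1,\ldots,F_k$ avec transitions en des points de $H^+$. Chaque transition s'opère par un sommet ou une arête commune à $F_i$ et $F_{i+1}$ ; dans les deux cas, cela entraîne l'existence d'un sommet commun $u_i\in H^+$. On construit ainsi une chaîne $v=u_0, u_1,\ldots, u_k=w$ telle que $u_{i-1}$ et $u_i$ sont deux sommets de $F_i$ dans $H^+$, et l'observation initiale assure qu'ils sont reliés dans $S^{(1)}\cap H^+$ par un arc d'arêtes de $F_i$, d'où le résultat.

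L'étape la plus technique sera l'extraction rigoureuse de cette suite de faces avec transitions dans $H^+$ : il faudra éventuellement perturber ou subdiviser le chemin pour traiter proprement les configurations dégénérées (passage le long d'une arête située sur $H$, transit par un sommet de $H$, etc.), mais ceci relève d'une manipulation standard dans ce cadre polyédral.
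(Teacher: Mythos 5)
Votre démonstration est correcte et suit la démarche classique de Kühnel : ramener la 0-tension à la connexité des sommets dans le demi-espace, en utilisant que tout demi-espace affine ouvert rencontrant une face convexe contient au moins un sommet de celle-ci, et que les sommets d'une face situés dans ce demi-espace y sont reliés entre eux par un arc d'arêtes de la face. Notez que l'article ne donne aucune preuve de ce lemme (il est cité de \cite{ku80}, lemme~1, la démonstration étant renvoyée au chapitre~2 de \cite{ku95}) ; les points que vous laissez en suspens --- extraction d'une suite finie de faces le long du chemin joignant deux sommets, et traitement composante par composante si la surface n'est pas connexe (la 0-tension étant définie par l'injectivité en $H_0$ et non par la seule connexité) --- sont effectivement de routine et ne constituent pas de lacune réelle.
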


L'énoncé qui suit permet de caractériser les graphes 0-tendus.
\begin{lemm}[\cite{ku80}, lemme~3]\label{rappel:lem3}
Un graphe $G$ est 0-tendu si et seulement s'il vérifie les deux conditions
qui suivent :
\begin{enumerate}
\item[(i)] le graphe $G$ contient le 1-squelette de son enveloppe convexe;
\item[(ii)] tout sommet de $G$ qui n'est pas un point extrême de 
l'enveloppe convexe de $G$ est contenu dans l'enveloppe convexe de ses
voisins.
\end{enumerate}
\end{lemm}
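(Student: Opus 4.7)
Plan de démonstration. On établit séparément les deux implications, en utilisant qu'un graphe connexe $G$ est 0-tendu si et seulement si, pour tout demi-espace affine ouvert $E$, l'intersection $G\cap E$ est connexe ou vide.

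Sens direct. Supposons $G$ 0-tendu. Pour (i), soit $[p,q]$ une arête de l'enveloppe convexe de $G$ et $H=\{\ell=c\}$ un hyperplan d'appui vérifiant $H\cap\mathrm{conv}(G)=[p,q]$. Pour $\varepsilon>0$ assez petit, $G\cap\{\ell>c-\varepsilon\}$ se rétracte par déformation sur $G\cap[p,q]$ ; ce dernier contient $p$ et $q$, et est connexe si et seulement si $[p,q]\subset G$, d'où (i). Pour (ii), soit $v$ un sommet de $G$ qui n'est pas un point extrême de $\mathrm{conv}(G)$, et supposons $v\notin\mathrm{conv}(N(v))$. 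Par séparation, il existe une forme linéaire $\ell$ telle que $\ell(v)>\max_{u\in N(v)}\ell(u)$ ; en choisissant $c\in(\max_{u\in N(v)}\ell(u),\ell(v))$, le demi-espace $E=\{\ell>c\}$ contient $v$ mais aucun voisin de $v$, de sorte que la composante connexe de $v$ dans $G\cap E$ est localisée au voisinage de $v$. Or $v$ s'écrit comme combinaison convexe stricte des points extrêmes de $\mathrm{conv}(G)$, qui sont tous des sommets de $G$, donc il existe un tel point extrême $w$ avec $\ell(w)\geq\ell(v)>c$ ; le sommet $w$ appartient à $V(G)\cap E$ sans être voisin de $v$, ce qui produit dans $G\cap E$ au moins deux composantes connexes envoyées sur la même composante de $G$, contredisant la 0-tension.

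Sens réciproque. Supposons (i) et (ii) ; la condition (i) entraîne la connexité de $G$, le 1-squelette de tout polytope étant connexe. Soit $E=\{\ell>c\}$ un demi-espace ouvert rencontrant $G$ ; les arêtes de $G$ étant des segments, les composantes connexes de $G\cap E$ sont en bijection avec celles du sous-graphe induit $G[V_E]$, où $V_E=V(G)\cap E$ (les demi-arêtes pendantes issues de $V_E$ n'en modifient pas la connexité). Quitte à perturber $\ell$, on peut supposer ses valeurs deux à deux distinctes sur $V(G)$. On raisonne par récurrence sur $|V_E|$ : le cas $|V_E|=1$ est trivial ; sinon, soit $v$ le sommet de $V_E$ minimisant $\ell$ et $V'=V_E\setminus\{v\}$. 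Par hypothèse de récurrence appliquée au seuil $\ell(v)$, $G[V']$ est connexe, et il reste à exhiber un voisin de $v$ dans $V'$. Si $v$ est extrême dans $\mathrm{conv}(G)$, un pas de pivotage sur le 1-squelette du polytope $\mathrm{conv}(G)$ fournit, puisque $v$ n'est pas le maximum de $\ell$, une arête $vu$ du 1-squelette avec $\ell(u)>\ell(v)$ ; par (i), cette arête appartient à $G$, d'où $u\in N(v)\cap V'$. Si $v$ n'est pas extrême, (ii) donne $v=\sum\lambda_i u_i$ avec $u_i\in N(v)$ et $\lambda_i>0$ ; la relation $\ell(v)=\sum\lambda_i\ell(u_i)$ impose qu'au moins un des $\ell(u_i)$ dépasse strictement $\ell(v)$ (par injectivité), fournissant le voisin cherché.

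Principale difficulté. Le point le plus délicat est la gestion rigoureuse des configurations dégénérées (plusieurs sommets alignés sur un hyperplan d'appui, ou au même niveau d'une forme linéaire), qui est traitée par les arguments de perturbation évoqués et par une utilisation soigneuse de l'injectivité stricte dans les combinaisons convexes ; la mise en correspondance entre les composantes topologiques de $G\cap E$ et celles du sous-graphe combinatoire $G[V_E]$, ainsi que l'invocation propre du pas de pivotage, demandent également une attention particulière.
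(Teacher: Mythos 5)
L'article ne démontre pas ce lemme : il est cité de Kühnel (\cite{ku80}, lemme~3) et l'auteur renvoie au chapitre~2 de \cite{ku95} pour la preuve. Votre argument reproduit pour l'essentiel la démonstration classique (reformulation de la 0-tension par la connexité de $G\cap E$, hyperplan d'appui pour (i), séparation pour (ii), puis balayage par une forme linéaire générique et argument de « chemin croissant » pour la réciproque) et il est correct dans ses grandes lignes. Deux points demandent toutefois une retouche. D'abord, la condition (i) seule n'entraîne pas la connexité de $G$ : contenir un sous-graphe connexe n'empêche pas d'avoir d'autres composantes, et le lemme est d'ailleurs faux sans hypothèse de connexité (deux segments parallèles disjoints en position convexe dans le plan sont 0-tendus mais ne contiennent pas le 1-squelette de leur enveloppe convexe, qui est un quadrilatère) ; il faut donc supposer $G$ connexe, comme vous le faites implicitement dans votre reformulation initiale --- votre récurrence appliquée au seuil $c\to-\infty$ redonne du reste cette connexité à partir de (i) \emph{et} (ii). Ensuite, dans le pas de pivotage, l'arête $[v,u]$ du 1-squelette du polytope $\mathrm{conv}(G)$ est contenue dans $G$ en tant qu'ensemble de points, mais rien n'interdit qu'elle soit subdivisée par des sommets de $G$ ; on ne peut donc pas conclure directement $u\in N(v)$, mais le premier sommet de $G$ rencontré sur $[v,u]$ en partant de $v$ est bien un voisin de $v$ de valeur de $\ell$ strictement supérieure, ce qui suffit. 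Moyennant ces précisions (et la vérification, standard, que la perturbation de $\ell$ ne modifie pas l'ensemble $V_E$), votre démonstration est complète.
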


Un autre résultat de W.~Kühnel précise la distinction entre plongements
tendus et 0-tendus d'une surface. C'est ce lemme qui garantira que 
les plongements du théorème~\ref{intro:th2} sont tendus.
\begin{lemm}[\cite{ku80}, lemme~7]\label{rappel:lem4}
Pour toute surface polyédrale $S$ à bord, les propositions suivantes sont 
équivalentes:
\begin{enumerate}
\item[(i)] la surface $S$ est tendue;
\item[(ii)] la surface $S$ est 0-tendue et tous les points extrêmes de son
enveloppe convexe sont situés sur son bord.
\end{enumerate}
\end{lemm}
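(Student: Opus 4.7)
La surface $\varphi(S)$ étant de dimension~2, on a $H_p(\varphi(S);\mathbb Z_2)=0$ pour $p\geq 2$, et la $p$-tension y est automatiquement vérifiée. La tension équivaut donc à la conjonction de la 0-tension et de la 1-tension, et il s'agit de démontrer l'équivalence, sous l'hypothèse de 0-tension, entre la 1-tension et la condition que tous les points extrêmes de $\mathrm{conv}(\varphi(S))$ soient situés sur $\partial S$.

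Pour l'implication (i)$\Rightarrow$(ii), je raisonnerais par contraposée. Supposons qu'un point extrême $v$ se trouve dans $\mathrm{Int}(S)$. L'extrêmalité fournit un hyperplan d'appui $H$ en $v$ tel que $\varphi(S)\cap H=\{v\}$; en prenant pour $E$ le demi-espace ouvert de bord $H$ contenant $\varphi(S)\setminus\{v\}$, on a $\varphi(S)\cap E=\varphi(S)\setminus\{v\}$. Le bord $\gamma=\partial D$ d'un petit disque polyédral $D\subset\varphi(S)$ centré en $v$ est nul dans $H_1(\varphi(S);\mathbb Z_2)$ puisqu'il borde $D$. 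La suite de Mayer-Vietoris appliquée au recouvrement $\varphi(S)=(\varphi(S)\setminus\{v\})\cup D$, combinée à l'identité $\mathrm{rg}\,H_1(\Sigma;\mathbb Z_2)=1-\chi(\Sigma)$ valable pour toute surface compacte $\Sigma$ à bord non vide (et au fait que $\chi$ baisse d'une unité lorsqu'on retire un petit disque), montre que le morphisme $H_1(\varphi(S)\setminus\{v\})\to H_1(\varphi(S))$ est surjectif avec un saut de rang égal à~$1$. Son noyau, nécessairement de dimension~$1$, est engendré par $[\gamma]$, ce qui contredit la 1-tension.

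Pour l'implication (ii)$\Rightarrow$(i), fixons un demi-espace ouvert $E$ (en position générique quitte à perturber $H=\partial E$) et posons $A=\varphi(S)\cap\overline E$, $B=\varphi(S)\cap\overline{E^c}$ et $C=A\cap B=\varphi(S)\cap H$. La suite de Mayer-Vietoris associée au recouvrement $\varphi(S)=A\cup B$ identifie le noyau de $H_1(A)\to H_1(\varphi(S))$ à l'image dans $H_1(A)$ de $\ker(H_1(C)\to H_1(B))$, autrement dit, via la suite exacte longue de la paire $(B,C)$, à l'image du morphisme composé $H_2(B,C;\mathbb Z_2)\to H_1(C)\to H_1(A)$. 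Pour conclure il suffit donc de montrer que $H_2(B,C;\mathbb Z_2)=0$. Or soit $c=\sum_T a_T\,T\in C_2(B;\mathbb Z_2)$ une 2-chaîne telle que $\partial c\subset C$; pour chaque arête intérieure à $B$ (i.e. non située sur $\partial B$), la condition $\partial c\subset C$ force l'égalité des coefficients $a_T$ et $a_{T'}$ des deux triangles adjacents. La connexité du graphe dual des triangles de $B$ (conséquence de la connexité de $B$, elle-même garantie par la 0-tension) entraîne que les $a_T$ sont tous égaux, donc $c=0$ ou $c=[B]$. Mais $\partial[B]=[C]+[\partial S\cap B]$ n'est pas contenu dans $C$ dès que $\partial S\cap B\neq\emptyset$, configuration qu'assure l'hypothèse sur les points extrêmes: le minimum sur $\varphi(S)$ de la fonction linéaire définissant $E$ est atteint en un point extrême, donc sur $\partial S$, et ce point appartient à $B$. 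Ainsi $c=0$, donc $H_2(B,C)=0$, et la 1-tension est établie.

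L'obstacle principal est l'implication (ii)$\Rightarrow$(i), où il faut relier la condition géométrique sur les points extrêmes à une propriété homologique précise du découpage de $\varphi(S)$ par $H$. Le point-clé conceptuel est que chaque demi-morceau non vide rencontre nécessairement $\partial S$, et c'est cette rencontre qui empêche l'existence de 2-cycles parasites dans les pièces coupées. Une difficulté technique mineure réside dans le traitement des surfaces à coins résultant de l'intersection $C\cap\partial S$; sur $\mathbb Z_2$, toutefois, ce problème se dissout et la combinatoire élémentaire sur les 2-chaînes suffit.
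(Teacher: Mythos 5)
Le texte de l'article ne contient pas de démonstration de ce lemme : il est cité comme le lemme~7 de Kühnel, avec renvoi au chapitre~2 de \cite{ku95} pour la preuve. Votre argument est donc à comparer à celui de Kühnel plutôt qu'à une preuve interne au papier, et il me semble correct sur le fond, tout en suivant une route un peu différente : là où Kühnel s'appuie sur la théorie de Morse polyédrale (comptage des points critiques des fonctions hauteur, un maximum local intérieur créant un point critique d'indice~2 excédentaire), vous menez un calcul homologique direct. Votre sens (i)$\Rightarrow$(ii) est solide : un point extrême de l'enveloppe convexe d'une surface polyédrale est un sommet exposé, le demi-espace ouvert associé découpe $\varphi(S)\setminus\{v\}$, et le bord d'un petit disque autour d'un sommet intérieur est bien non nul dans $H_1(\varphi(S)\setminus\{v\};\mathbb Z_2)$ dès que la composante de $v$ a un bord non vide (votre calcul de rang via $\chi$ est juste). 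Le sens (ii)$\Rightarrow$(i) par Mayer--Vietoris et l'annulation de $H_2(B,C;\mathbb Z_2)$ est également correct : l'identification du noyau de $H_1(A)\to H_1(\varphi(S))$ à $i_*\partial\bigl(H_2(B,C)\bigr)$ est exacte, et l'argument combinatoire sur les 2-chaînes (coefficients constants par connexité du graphe dual, puis exclusion de la chaîne fondamentale parce que $B$ rencontre $\partial S$ hors de $H$ en un minimiseur extrême) est le bon point-clé. Deux points restent implicites et mériteraient une ligne chacun : le passage des demi-espaces ouverts de la définition aux morceaux fermés $A$, $B$ (pour un ensemble polyédral, $\varphi(S)\cap E$ se rétracte par déformation sur $\varphi(S)\cap\{f\geq\varepsilon\}$ pour $\varepsilon$ assez petit, ce qui légitime la réduction) et la connexité de $B$ elle-même, qui découle de la TPP appliquée à des demi-espaces ouverts emboîtés plutôt que directement au demi-espace fermé. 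Ce sont des gloses standard dans cette littérature, pas des lacunes de fond.
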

Ce lemme implique en particulier qu'une surface polyédrale tendue à bord 
est contenue dans l'enveloppe convexe de son bord.

Enfin, un dernier énoncé, dû à B.~Grünbaum, permettra d'établir un lien
entre la dimension de l'espace ambiant et les graphes plongés dans 
la surface.
\begin{lemm}[\cite{gr65}] \label{rappel:lem5}
Le 1-squelette d'un polytope convexe de dimension~$d$ admet une
subdivision du graphe complet $K_{d+1}$ comme sous-graphe.
\end{lemm}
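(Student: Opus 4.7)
Mon plan est de suivre la stratégie classique de Banchoff--Kühnel pour les surfaces closes, en l'adaptant grâce au lemme~\ref{rappel:lem4} qui contraint les points extrêmes de l'enveloppe convexe à se trouver sur le bord. L'objectif est de fabriquer un plongement propre de $K_{n+1}$ dans~$S$, ce qui forcera $c_0(S)\geq n+1$ par la définition~\ref{intro:def2}.

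Concrètement, je pars d'un plongement polyédral substantiel et tendu $\varphi:S\to\R^n$ et je pose $P$ l'enveloppe convexe de $\varphi(S)$. La substantialité assure que $P$ est un polytope convexe de dimension~$n$. Comme la tension implique la $0$-tension, le lemme~\ref{rappel:lem1} entraîne que le $1$-squelette de~$P$ est entièrement contenu dans $\varphi(S)$. Par ailleurs, le lemme~\ref{rappel:lem4} affirme que tous les points extrêmes de~$P$ --- c'est-à-dire ses sommets --- sont situés sur $\varphi(\partial S)$. J'applique alors le lemme~\ref{rappel:lem5} de Grünbaum à $P$: son $1$-squelette contient une subdivision de $K_{n+1}$ dont les $n+1$ sommets principaux sont des sommets de~$P$. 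Ces sommets principaux sont donc sur $\varphi(\partial S)$, tandis que les chemins jouant le rôle des arêtes vivent dans le $1$-squelette de~$P$, donc dans $\varphi(S)$. En tirant cette configuration en arrière par $\varphi^{-1}$, j'obtiens un plongement topologique de $K_{n+1}$ dans~$S$ dont les sommets sont tous sur $\partial S$: c'est un plongement propre au sens de la définition~\ref{intro:def1}, et l'inégalité $c_0(S)\geq\chi(K_{n+1})=n+1$ s'ensuit aussitôt.

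La seule subtilité par rapport au cas des surfaces closes est précisément l'étape où l'on veut faire tomber les sommets de $K_{n+1}$ sur $\partial S$ et non à l'intérieur de~$S$: c'est exactement la moitié « points extrêmes sur le bord » du lemme~\ref{rappel:lem4}, qui est le substitut crucial à la simple $0$-tension utilisée par Banchoff, et qui justifie de faire intervenir $c_0(S)$ plutôt que $c(S)$. Tous les autres ingrédients (inclusion du $1$-squelette de l'enveloppe convexe, existence d'une subdivision de $K_{n+1}$ dans un polytope de dimension~$n$) sont directement repris des résultats rappelés à la section~\ref{rappel}, de sorte que l'essentiel du travail est déjà fait par les lemmes.
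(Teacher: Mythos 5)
Votre texte ne démontre pas l'énoncé demandé : il l'utilise. Le lemme~\ref{rappel:lem5} est l'affirmation de Grünbaum selon laquelle le 1-squelette de tout polytope convexe de dimension~$d$ contient une subdivision de $K_{d+1}$; or, au c\oe{}ur de votre argument, vous écrivez « j'applique alors le lemme~\ref{rappel:lem5} de Grünbaum à $P$ », c'est-à-dire que vous prenez ce résultat comme acquis. Ce que vous rédigez en réalité est la démonstration du théorème~\ref{intro:th1} (la majoration $n\leq c_0(S)-1$ pour un plongement polyédral substantiel tendu), démonstration qui est d'ailleurs correcte et essentiellement identique à celle de la section~\ref{borne} du texte --- mais ce n'est pas la question posée. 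Il y a donc une pétition de principe par rapport à l'énoncé visé : rien dans votre texte n'explique pourquoi le graphe d'un $d$-polytope contient une subdivision de $K_{d+1}$.

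Pour démontrer réellement le lemme~\ref{rappel:lem5}, il faudrait un argument de géométrie combinatoire des polytopes, indépendant de toute surface : typiquement une récurrence sur la dimension~$d$ (le cas $d=2$ étant un polygone, dont le bord contient un triangle subdivisé), combinée à la $d$-connexité du graphe d'un $d$-polytope (théorème de Balinski) pour relier un sommet supplémentaire aux $d$ sommets principaux d'une subdivision de $K_d$ vivant dans une facette, par des chemins intérieurement disjoints. Le papier lui-même ne reproduit pas cette démonstration et renvoie à \cite{gr65}; si l'exercice consistait à la fournir, c'est cette construction qu'il faut développer, et non l'application du lemme aux plongements tendus.
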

Une subdivision d'un graphe est un graphe obtenu par insertion de sommets
sur les arêtes. 
Ce dernier résultat admet plusieurs formulations. D'un point de vue 
topologique on peut dire que \emph{le graphe complet $K_{d+1}$ se plonge 
dans le 1-squelette de tout polytope convexe de dimension~$d$}. D'un
point de vue combinatoire, il signifie que \emph{le  1-squelette d'un 
polytope convexe de dimension~$d$ admet le graphe complet $K_{d+1}$
comme mineur}.

\section{Borne sur la codimension des plongement tendus}\label{borne}
L'objet de cette section est de démontrer le théorème~\ref{intro:th1}
en utilisant les résultats rappelés dans la section précédente.

 Considérons une surface compacte à bord $S$ plongée de manière
polyédrale et tendue et substantielle dans un espace $\R^n$ et notons 
$G$ le 1-squelette de son enveloppe convexe. Le graphe $G$ est contenu
dans $S$ selon le lemme~\ref{rappel:lem1}.

 Le lemme~\ref{rappel:lem4} assure que les sommets de $G$ sont situés 
sur le bord de $S$, c'est-à-dire que $G$ se plonge proprement dans $S$.

 Comme le plongement de $S$ est substantiel, son enveloppe convexe est
un polytope de dimension $n$. Le graphe $G$ contient donc une subdivision
du graphe complet à $n+1$ sommets $K_{n+1}$ d'après le lemme~\ref{rappel:lem5}. 
Ce graphe $K_{n+1}$ est donc lui-aussi proprement plongé dans $S$.

Enfin, le théorème~\ref{rappel:th} affirme que $c_0(S)$ est le
nombre du sommet du plus grand graphe complet qui se plonge proprement
dans $S$. On en déduit que $n+1\leq c_0$, ce qui conclut la démonstration
du théorème~\ref{intro:th1}.

\section{Construction de plongements tendus}\label{cons}
\subsection{Trois méthodes générales}
Ce paragraphe est consacré au rappel d'une méthode de construction
de plongement tendu partant d'une triangulation de la surface par
un graphe complet et à la démonstration de deux résultats sur la réalisation
tendue d'opérations topologiques sur une surface.

Nous commençons avec la construction d'un plongement tendu d'une surface
(avec ou sans bord) qu'on suppose triangulée par un graphe complet. Cette
technique, que nous qualifierons de \emph{construction canonique} était
déjà à la base des constructions de W.~Kühnel. 

\begin{theo}\label{cons:canonique}
Soit $S$ une surface compacte triangulée par un graphe complet $K_{n+1}$.
Si $S$ a un bord, on suppose de plus que $K_{n+1}$ est proprement
plongé dans $S$. Alors $S$ admet un plongement substantiel polyédral tendu 
dans $\R^n$.
\end{theo}
\begin{proof}
On considère un simplexe à $n+1$ sommets dans $\R^n$. Son 1-squelette,
qu'on identifiera au graphe $K_{n+1}$, est
0-tendu d'après le lemme~\ref{rappel:lem3}. On construit un plongement
de $S$ dans $\R^n$ en envoyant chaque face de la triangulation induite par
$K_{n+1}$ sur la 2-face correspondante du simplexe. Le 1-squelette 
de ce plongement de $S$ est le 1-squelette du simplexe. Le plongement est
donc 0-tendu selon le lemme~\ref{rappel:lem2}. Si la surface est close,
cela suffit à conclure que le plongement est tendu (\cite{ku95},
lemme~2.5). Si la surface a un bord, on remarque que l'enveloppe convexe
de son plongement est exactement le simplexe. Comme le graphe $K_{n+1}$ 
est proprement plongé, les points extrêmes de cette enveloppe convexe son
sur le bord de la surface, et on conclut grâce au lemme~\ref{rappel:lem4}.
\end{proof}

Nous en venons maintenant à la démonstration de deux résultats sur
la construction d'un plongement tendu d'une surface à partir du plongement
d'une autre surface. Le premier concerne les surfaces obtenues par adjonction
d'une anse sur le bord. Le second s'applique aux surfaces obtenues 
par suppression d'un disque et sera utilisé systématiquement dans la 
démonstration du théorème~\ref{intro:th2} pour 
les surfaces ayant un grand nombre de composantes de bord.
\begin{prop}\label{cons:anse}
Soient $S$ une surface polyédrale et $a$, $b$, $c$ et $d$ quatre points 
distincts de $S$ tels que $[ab]$ et $[cd]$ soient deux arêtes du bord de $S$. 

 Si le plongement de $S$ est 0-tendu (resp. tendu) et que l'intérieur du 
tétraèdre $abcd$ 
ne rencontre pas ce plongement, alors une surface $S'$ obtenue par 
l'adjonction d'une anse entre $[ab]$ et $[cd]$ admet un plongement polyédral
0-tendu (resp. tendu) dans le même espace.

Si de plus le plongement polyédral de $S$ a tous ses sommets
sur son bord, alors le plongement de $S'$ aussi.
\end{prop}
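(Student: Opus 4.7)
Le plan est de réaliser l'anse comme réunion de deux faces du tétraèdre $abcd$, puis de déduire les propriétés de tension du nouveau plongement de celles de $S$ en s'appuyant sur le fait qu'aucun sommet n'est ajouté. Concrètement, on construit $S'$ en recollant à $S$ les deux triangles $abc$ et $acd$ qui partagent la diagonale $[ac]$ : ils forment un rectangle attaché à $S$ le long de ses côtés opposés $[ab]$ et $[cd]$, et comme recoller un disque le long de deux arcs disjoints du bord équivaut à une adjonction de 1-anse, on obtient bien la surface $S'$ annoncée.

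Il faut d'abord vérifier que ce recollement définit un véritable plongement polyédral. La diagonale $[ac]$ est dans l'intérieur du tétraèdre, donc disjointe de $S$ par hypothèse ; pour les arêtes $[ad]$, $[bc]$ et pour les intérieurs relatifs des triangles $abc$, $acd$, qui vivent sur le bord du tétraèdre, on s'appuie sur le fait que $[ab]$ et $[cd]$ sont des arêtes du bord de $S$, ce qui force les faces de $S$ adjacentes à ces arêtes à s'étendre à l'extérieur du tétraèdre (sans quoi elles rencontreraient son intérieur). C'est l'étape qui me semble la plus délicate.

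La 0-tension de $S'$ se traite alors au niveau du 1-squelette, grâce au lemme~\ref{rappel:lem2}. Le 1-squelette $G'$ de $S'$ s'obtient à partir de celui $G$ de $S$ en ajoutant les trois arêtes $[ac]$, $[ad]$ et $[bc]$ sans introduire de nouveau sommet, de sorte que l'enveloppe convexe et ses points extrêmes sont inchangés. Les deux conditions du lemme~\ref{rappel:lem3} sont alors automatiquement héritées : la condition~(i) parce que $G'\supseteq G$ contenait déjà le 1-squelette de l'enveloppe convexe, et la condition~(ii) parce que le voisinage d'un sommet dans $G'$ contient son voisinage dans $G$.

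Pour la version tendue, on applique le lemme~\ref{rappel:lem4} : il reste à vérifier que les points extrêmes de l'enveloppe convexe, qui sont inchangés et qui étaient sur $\partial S$, demeurent sur $\partial S'$. Or $\partial S'$ s'obtient à partir de $\partial S$ en remplaçant les arêtes $[ab]$ et $[cd]$ par $[ad]$ et $[bc]$ : les sommets distincts de $a,b,c,d$ restent au bord, tandis que $a,b,c,d$ y sont par les nouvelles arêtes. Comme aucun sommet n'est ajouté, le même raisonnement établit aussitôt la dernière assertion concernant la position des sommets.
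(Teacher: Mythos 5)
Votre construction est réellement différente de celle de l'article : vous réalisez l'anse sans ajouter de sommet, en recollant à $S$ les deux faces $abc$ et $acd$ du tétraèdre, alors que l'article introduit deux nouveaux sommets $e$ et $f$ (de coordonnées barycentriques $(2,1,2,1)$ et $(1,2,1,2)$ dans la base affine $(a,b,c,d)$) et quatre triangles. Mais c'est précisément cette économie de sommets qui fait échouer l'étape que vous signalez vous-même comme la plus délicate. Le point erroné est concret : la diagonale $[ac]$ n'est \emph{pas} dans l'intérieur du tétraèdre $abcd$ ; c'en est une arête, donc elle est contenue dans son bord (ses points ont des coordonnées barycentriques nulles en $b$ et en $d$). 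Plus généralement, les triangles $abc$ et $acd$ sont deux faces du tétraèdre et vivent entièrement sur son bord, là où l'hypothèse --- seul l'intérieur \emph{ouvert} du tétraèdre évite $S$ --- ne donne aucun contrôle. Rien n'empêche par exemple $[ac]$ d'être une arête de $S$ dont les deux faces adjacentes restent à l'extérieur du tétraèdre, ou une face de $S$ d'être coplanaire au triangle $abc$ et de le chevaucher : ces configurations sont compatibles avec les hypothèses de la proposition et rendent votre $S'$ non plongée. Votre argument local (les faces de $S$ adjacentes à $[ab]$ et $[cd]$ sortent du tétraèdre) ne couvre ni ces cas ni les morceaux de $S$ éloignés de $[ab]$ et $[cd]$ qui toucheraient le bord du tétraèdre sans en rencontrer l'intérieur.

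La construction de l'article évite exactement cet écueil : $e$ et $f$ étant dans l'intérieur ouvert du tétraèdre, toutes les nouvelles arêtes et les intérieurs relatifs des quatre nouveaux triangles sont contenus dans cet intérieur ouvert (hormis leurs extrémités en $a$, $b$, $c$, $d$, qui sont déjà des points de $S$), de sorte que l'hypothèse garantit immédiatement que l'on a bien un plongement. Le prix à payer est la vérification de la condition~(ii) du lemme~\ref{rappel:lem3} pour les deux nouveaux sommets non extrêmes, ce que les poids barycentriques choisis assurent ($e$ est dans l'enveloppe convexe de $a$, $c$ et $f$, et $f$ dans celle de $b$, $d$ et $e$). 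Le reste de votre raisonnement (0-tension via les lemmes~\ref{rappel:lem2} et~\ref{rappel:lem3}, enveloppe convexe et points extrêmes inchangés, tension via le lemme~\ref{rappel:lem4}, sommets sur le bord) est correct dans son principe et parallèle à celui de l'article, mais il repose entièrement sur un recollement qui, tel quel, ne définit pas toujours un plongement ; pour le réparer, il faut courber la bande vers l'intérieur du tétraèdre, ce qui ramène essentiellement à la construction de l'article.
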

\begin{proof}
Le principe de la démonstration, illustré par la figure~\ref{cons:figanse}, 
consiste à ajouter l'anse dans le tétraèdre
$abcd$ en ajoutant deux sommets et quatre triangles à la surface $S$.
On va d'abord traiter le cas où $S$ est 0-tendu, en indiquant ensuite
comment le cas tendu s'en déduit.
\begin{figure}[h]
\begin{center}
\begin{picture}(0,0)%
\includegraphics{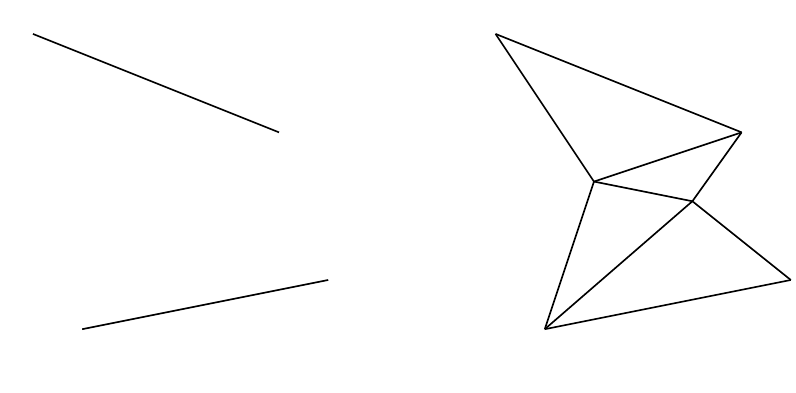}%
\end{picture}%
\setlength{\unitlength}{4144sp}%
\begingroup\makeatletter\ifx\SetFigFont\undefined%
\gdef\SetFigFont#1#2#3#4#5{%
  \reset@font\fontsize{#1}{#2pt}%
  \fontfamily{#3}\fontseries{#4}\fontshape{#5}%
  \selectfont}%
\fi\endgroup%
\begin{picture}(3630,1786)(886,-1025)
\put(2206,209){\makebox(0,0)[lb]{\smash{{\SetFigFont{12}{14.4}{\rmdefault}{\mddefault}{\updefault}{\color[rgb]{0,0,0}$b$}%
}}}}
\put(1261,-961){\makebox(0,0)[lb]{\smash{{\SetFigFont{12}{14.4}{\rmdefault}{\mddefault}{\updefault}{\color[rgb]{0,0,0}$c$}%
}}}}
\put(2386,-736){\makebox(0,0)[lb]{\smash{{\SetFigFont{12}{14.4}{\rmdefault}{\mddefault}{\updefault}{\color[rgb]{0,0,0}$d$}%
}}}}
\put(901,614){\makebox(0,0)[lb]{\smash{{\SetFigFont{12}{14.4}{\rmdefault}{\mddefault}{\updefault}{\color[rgb]{0,0,0}$a$}%
}}}}
\put(4321,209){\makebox(0,0)[lb]{\smash{{\SetFigFont{12}{14.4}{\rmdefault}{\mddefault}{\updefault}{\color[rgb]{0,0,0}$b$}%
}}}}
\put(3376,-961){\makebox(0,0)[lb]{\smash{{\SetFigFont{12}{14.4}{\rmdefault}{\mddefault}{\updefault}{\color[rgb]{0,0,0}$c$}%
}}}}
\put(4501,-736){\makebox(0,0)[lb]{\smash{{\SetFigFont{12}{14.4}{\rmdefault}{\mddefault}{\updefault}{\color[rgb]{0,0,0}$d$}%
}}}}
\put(3016,614){\makebox(0,0)[lb]{\smash{{\SetFigFont{12}{14.4}{\rmdefault}{\mddefault}{\updefault}{\color[rgb]{0,0,0}$a$}%
}}}}
\put(4141,-151){\makebox(0,0)[lb]{\smash{{\SetFigFont{12}{14.4}{\rmdefault}{\mddefault}{\updefault}{\color[rgb]{0,0,0}$f$}%
}}}}
\put(3376,-106){\makebox(0,0)[lb]{\smash{{\SetFigFont{12}{14.4}{\rmdefault}{\mddefault}{\updefault}{\color[rgb]{0,0,0}$e$}%
}}}}
\end{picture}%
\end{center}
\caption{Adjonction d'anse\label{cons:figanse}}
\end{figure}

Les deux nouveaux sommets $e$ et $f$ doivent être placés de manière à 
ce que la surface obtenue reste 0-tendue. On peut les définir précisément
en donnant leurs coordonnées barycentriques dans la base affine $(a,b,c,d)$.
Le point $e$ sera donné par les poids $(2,1,2,1)$ et le point $f$ par
les poids $(1,2,1,2)$. On peut vérifier que $e$ est dans l'enveloppe 
convexe de $a$, $c$ et $f$, et de même $f$ dans l'enveloppe convexe
de $b$, $d$ et $e$. La surface obtenue est bien plongée puisque $S$ ne 
rencontre pas l'intérieur de $abcd$, et d'après les lemmes~\ref{rappel:lem2}
et~\ref{rappel:lem3}, ce plongement est 0-tendu.

Si la surface $S$ est tendue, les points extrêmes de son enveloppe convexe
sont sur son bord selon le lemme~\ref{rappel:lem4}. Comme l'adjonction 
d'anse qu'on vient de construire n'ajoute pas de point extrême et que les
quatre points $a$, $b$, $c$ et $d$ restent sur le bord,
le plongement obtenu vérifie la même propriété et le plongement est donc tendu.
\end{proof}
\begin{rema}\label{cons:rem}
On peut changer la surface obtenue (en particulier son nombre de composantes
de bord) en permutant le rôle de $a$ et $b$. Par exemple, en partant du
ruban de Möbius, on peut obtenir soit le plan projectif privé de deux disques,
soit la bouteille de Klein privée d'un disque.
\end{rema}
 
On rappelle que si $S$ est une surface close, on note $S_p$ la surface
$S$ privée de $p$~disques disjoints.
\begin{prop}\label{cons:trou}
Soit $S$ une surface close. Si $S_p$ admet un plongement substantiel 0-tendu 
(resp. tendu) dans $\R^n$, alors $S_{p+1}$ aussi.
Si de plus le plongement polyédral de $S_p$ a tous ses sommets
sur son bord, alors le plongement de $S_{p+1}$ aussi.
\end{prop}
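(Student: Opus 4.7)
L'idée est d'obtenir $S_{p+1}$ à partir de $S_p$ en enlevant un petit disque triangulaire à l'intérieur d'une face du plongement polyédral donné, en introduisant trois nouveaux sommets qui bordent le trou ainsi créé.

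Je commencerais par me ramener au cas où le plongement polyédral de $S_p$ est déjà trianglé. Ajouter des diagonales à l'intérieur d'une face polygonale convexe ne déplace aucun sommet, ne modifie pas l'enveloppe convexe de la surface et ne fait qu'agrandir les voisinages dans le 1-squelette ; les deux conditions du lemme~\ref{rappel:lem3} sont donc préservées, les faces restent convexes (elles deviennent des triangles) et le lemme~\ref{rappel:lem2} assure que la nouvelle surface reste 0-tendue. Le lemme~\ref{rappel:lem4} garantit qu'elle reste tendue si $S_p$ l'était, et la propriété des sommets sur le bord est conservée puisqu'aucun sommet n'est créé.

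On choisit alors une face triangulaire $F=abc$ quelconque du plongement et un paramètre $\epsilon\in(0,1/3)$. On introduit trois nouveaux sommets dans l'intérieur de $F$ par
\[ a' = (1-2\epsilon)\,a + \epsilon\,b + \epsilon\,c,\quad b' = \epsilon\,a + (1-2\epsilon)\,b + \epsilon\,c,\quad c' = \epsilon\,a + \epsilon\,b + (1-2\epsilon)\,c, \]
on supprime $F$ et on la remplace par les six triangles $aa'b'$, $ab'b$, $bb'c'$, $bc'c$, $cc'a'$, $ca'a$ qui forment un collier annulaire autour du triangle intérieur $a'b'c'$. Ce dernier est laissé vide : on a ainsi retiré un disque ouvert à $S_p$ et la surface obtenue est homéomorphe à $S_{p+1}$, avec des faces triangulaires donc convexes.

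Il reste à vérifier les propriétés souhaitées. La substantialité est immédiate car l'enveloppe convexe est inchangée, les trois nouveaux sommets se trouvant dans l'intérieur de la face $F$. Pour la 0-tension, le nouveau 1-squelette contient l'ancien et les nouveaux sommets ne sont pas des points extrêmes de l'enveloppe convexe ; la condition~(i) du lemme~\ref{rappel:lem3} reste vérifiée et la condition~(ii) aux anciens sommets est préservée puisque leurs voisinages n'ont fait que croître. Pour la condition~(ii) aux trois nouveaux sommets, un calcul barycentrique direct analogue à celui de la preuve de la proposition~\ref{cons:anse} montre que $a'$ appartient à l'enveloppe convexe de $\{a,b',c'\}$ dès que $\epsilon\leq 1/3$, et de même pour $b'$ et $c'$ ; ce calcul, élémentaire mais qui nécessite de prendre $\epsilon$ assez petit, est le point le plus délicat de la vérification. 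Le lemme~\ref{rappel:lem2} donne alors la 0-tension du plongement obtenu. Dans le cas tendu, l'ensemble des points extrêmes de l'enveloppe convexe est inchangé et reste sur le bord, si bien que le lemme~\ref{rappel:lem4} donne la tension. Enfin, lorsque tous les sommets de $S_p$ sont sur $\partial S_p$, les nouveaux sommets $a'$, $b'$, $c'$ sont sur le nouveau cycle de bord de $S_{p+1}$, et la propriété est préservée.
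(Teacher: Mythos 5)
Votre démonstration est correcte et suit essentiellement la même démarche que celle de l'article : on ajoute trois sommets à l'intérieur d'une face triangulaire (créée au besoin en triangulant), on découpe cette face en sept triangles et on retire le triangle central, puis on vérifie la 0-tension via les lemmes~\ref{rappel:lem2} et~\ref{rappel:lem3} et la tension via le lemme~\ref{rappel:lem4}. Vous explicitez simplement le calcul barycentrique (condition~(ii) du lemme~\ref{rappel:lem3} pour les nouveaux sommets) que l'article déclare « clair », ce qui est une précision bienvenue mais ne change pas l'argument.
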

\begin{proof}
Topologiquement, l'opération consiste à enlever un disque à la surface
$S_p$. On la réalise sur le plongement en enlevant un triangle dans
une face du plongement comme sur la figure~\ref{cons:figtrou} : en ajoutant
trois sommets dans une face trianglulaire du plongement de $S_p$, on découpe
cette face en sept triangles, puis on enlève le triangle central (si le 
plongement de $S_p$ n'a pas de face triangulaire, on peut en créer une en
ajoutant une arête dans une face quelconque).
\begin{figure}[h]
\begin{center}
\begin{picture}(0,0)%
\includegraphics{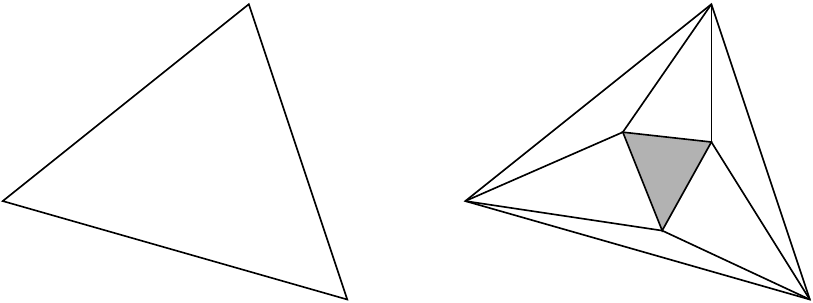}%
\end{picture}%
\setlength{\unitlength}{4144sp}%
\begingroup\makeatletter\ifx\SetFigFont\undefined%
\gdef\SetFigFont#1#2#3#4#5{%
  \reset@font\fontsize{#1}{#2pt}%
  \fontfamily{#3}\fontseries{#4}\fontshape{#5}%
  \selectfont}%
\fi\endgroup%
\begin{picture}(3714,1374)(889,-1198)
\end{picture}%
\end{center}
\caption{Création d'une composante de bord\label{cons:figtrou}}
\end{figure}

On obtient ainsi un plongement polyédral de $S_{p+1}$. En outre, 
il est clair que les nouveaux sommets sont dans 
l'enveloppe convexe de leurs voisins, ce plongement est donc 0-tendu d'après
le lemme~\ref{rappel:lem3}. En outre, ces sommets sont sur le bord de 
$S_{p+1}$.

Le cas où le plongement de $S_p$ est tendu se traite comme dans la proposition
précédente.
\end{proof}
\begin{rema}\label{cons:trou2}
Si tous les sommets du triangle initial de la figure~\ref{cons:figtrou}
ne sont pas sur le bord de la surface, on peut modifier la construction
de manière à en placer un ou deux sur le bord, comme sur la 
figure~\ref{cons:figtrou2}.
\end{rema}
\begin{figure}[h]
\begin{center}
\begin{picture}(0,0)%
\includegraphics{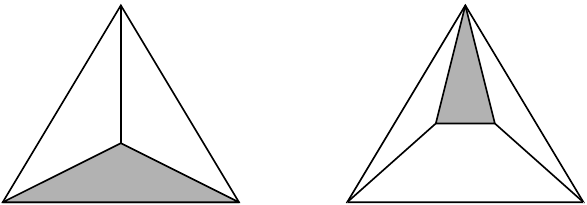}%
\end{picture}%
\setlength{\unitlength}{4144sp}%
\begingroup\makeatletter\ifx\SetFigFont\undefined%
\gdef\SetFigFont#1#2#3#4#5{%
  \reset@font\fontsize{#1}{#2pt}%
  \fontfamily{#3}\fontseries{#4}\fontshape{#5}%
  \selectfont}%
\fi\endgroup%
\begin{picture}(2679,924)(889,-523)
\end{picture}%
\end{center}
\caption{Placement d'un ou deux sommets sur le bord\label{cons:figtrou2}}
\end{figure}

\subsection{Démonstration du théorème~\ref{intro:th2}}
 Il s'agit maintenant d'exhiber des plongements tendus pour toutes les
surfaces annoncées par le théorème~\ref{intro:th2}. On va traiter 
les différentes surfaces closes $S$ par caractéristique d'Euler décroissante.

\subsubsection{La sphère $\mathbb S^2$}
Comme $\mathbb S_1^2$ est un disque
et que $c_0(\mathbb S_1^2)=3$, il suffit de plonger $\mathbb S^2_1$ dans 
$\R^2$ sous la forme d'un triangle. 

La surface $\mathbb S^2_2$ est un cylindre, qu'on peut plonger de manière
tendue dans $\R^3$ en le décomposant en trois rectangles comme sur la 
figure~\ref{cons:figcyl}.
\begin{figure}[h]
\begin{center}
\begin{picture}(0,0)%
\includegraphics{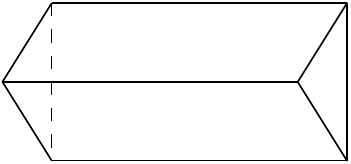}%
\end{picture}%
\setlength{\unitlength}{4144sp}%
\begingroup\makeatletter\ifx\SetFigFont\undefined%
\gdef\SetFigFont#1#2#3#4#5{%
  \reset@font\fontsize{#1}{#2pt}%
  \fontfamily{#3}\fontseries{#4}\fontshape{#5}%
  \selectfont}%
\fi\endgroup%
\begin{picture}(1599,744)(1564,-568)
\end{picture}%
\end{center}
\caption{Le cylindre\label{cons:figcyl}}
\end{figure}

Le cas des surfaces $\mathbb S^2_p$, $p\geq3$ se traite en appliquant 
la proposition~\ref{cons:trou}.

\subsubsection{Le plan projectif $\mathbb P^2$}
La surface $\mathbb P^2_1$ est le ruban de Möbius, qui est triangulé par
le graphe $K_5$ comme le montre la figure~\ref{cons:figm}.
\begin{figure}[h]
\begin{center}
\begin{picture}(0,0)%
\includegraphics{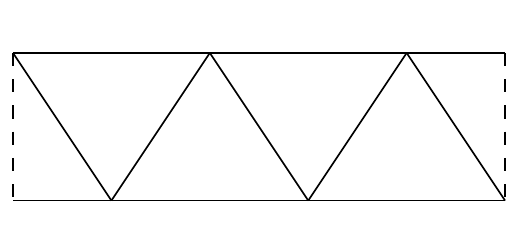}%
\end{picture}%
\setlength{\unitlength}{4144sp}%
\begingroup\makeatletter\ifx\SetFigFont\undefined%
\gdef\SetFigFont#1#2#3#4#5{%
  \reset@font\fontsize{#1}{#2pt}%
  \fontfamily{#3}\fontseries{#4}\fontshape{#5}%
  \selectfont}%
\fi\endgroup%
\begin{picture}(2387,1110)(391,-481)
\put(406,479){\makebox(0,0)[lb]{\smash{{\SetFigFont{12}{14.4}{\rmdefault}{\mddefault}{\updefault}{\color[rgb]{0,0,0}1}%
}}}}
\put(1306,479){\makebox(0,0)[lb]{\smash{{\SetFigFont{12}{14.4}{\rmdefault}{\mddefault}{\updefault}{\color[rgb]{0,0,0}3}%
}}}}
\put(1756,-466){\makebox(0,0)[lb]{\smash{{\SetFigFont{12}{14.4}{\rmdefault}{\mddefault}{\updefault}{\color[rgb]{0,0,0}4}%
}}}}
\put(2206,479){\makebox(0,0)[lb]{\smash{{\SetFigFont{12}{14.4}{\rmdefault}{\mddefault}{\updefault}{\color[rgb]{0,0,0}5}%
}}}}
\put(2656,-466){\makebox(0,0)[lb]{\smash{{\SetFigFont{12}{14.4}{\rmdefault}{\mddefault}{\updefault}{\color[rgb]{0,0,0}1}%
}}}}
\put(856,-466){\makebox(0,0)[lb]{\smash{{\SetFigFont{12}{14.4}{\rmdefault}{\mddefault}{\updefault}{\color[rgb]{0,0,0}2}%
}}}}
\end{picture}%
\end{center}
\caption{Le ruban de Möbius\label{cons:figm}}
\end{figure}
On peut donc mettre en \oe{}uvre la construction canonique du 
théorème~\ref{cons:canonique}. On obtient un plongement polyédral 
substantiel tendu du ruban de Möbius dans $\R^4$, les sommets
étant bien sur le bord de la surface, comme sur la figure~\ref{cons:figm}.

Le plongement tendu de $\mathbb P^2_2$ dans $\R^4$ se construit à partir de
celui du ruban de Möbius en appliquant l'une des propositions~\ref{cons:anse} 
(cf. remarque~\ref{cons:rem}) ou~\ref{cons:trou}.

Le plan projectif $\mathbb P^2$ est triangulé par un plongement du
graphe complet $K_6$ (c'est la projectivisation de l'icosaèdre). 
On obtient donc un plongement tendu de $\mathbb P^2_3$ dans $\R^5$
en partant du plongement canonique de $\mathbb P^2$ et en appliquant
trois fois la remarque~\ref{cons:trou2}. Les plongements tendus de
$\mathbb P^2_p$, $p\geq4$ s'obtiennent grâce à la proposition~\ref{cons:trou}.

\subsubsection{Le tore $\mathbb T^2$}
Le tore est triangulé par un plongement de $K_7$ (cette observation semble
remonter à A.~F.~Möbius \cite{mo86}). En enlevant un sommet
et les faces adjacentes, on obtient une triangulation de $\mathbb T^2_1$
par le graphe $K_6$. La construction canonique fournit un plongement tendu
de $\mathbb T^2_1$ dans~$\R^5$, tous les sommets étant sur le bord.

La surface $\mathbb T^2_2$ est traitée en appliquant l'une des 
propositions~\ref{cons:anse} ou~\ref{cons:trou}. 

Pour les surfaces $\mathbb T^2_p$, $p\geq3$, on part du plongement
canonique de $\mathbb T^2$ dans $\R^6$, on place six sommets sur le bord
en enlevant deux faces et le septième en appliquant la 
remarque~\ref{cons:trou2}. On peut ensuite appliquer la 
proposition~\ref{cons:trou}.
\subsubsection{La bouteille de Klein $\mathbb K^2$}
La surface $\mathbb K^2_1$ s'obtient par adjonction d'anse à partir
du ruban de Möbius. Son plongement tendu dans $\R^4$ est donc donné
par la proposition~\ref{cons:anse}.

\begin{figure}[h]
\begin{center}
\begin{picture}(0,0)%
\includegraphics{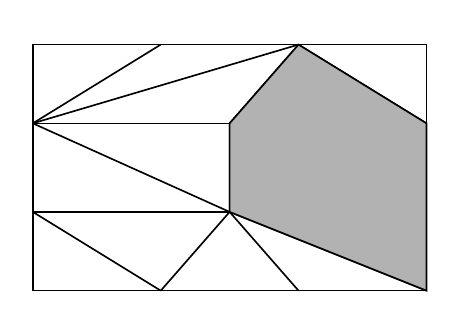}%
\end{picture}%
\setlength{\unitlength}{4144sp}%
\begingroup\makeatletter\ifx\SetFigFont\undefined%
\gdef\SetFigFont#1#2#3#4#5{%
  \reset@font\fontsize{#1}{#2pt}%
  \fontfamily{#3}\fontseries{#4}\fontshape{#5}%
  \selectfont}%
\fi\endgroup%
\begin{picture}(2117,1515)(301,-931)
\put(406,434){\makebox(0,0)[lb]{\smash{{\SetFigFont{12}{14.4}{\rmdefault}{\mddefault}{\updefault}{\color[rgb]{0,0,0}1}%
}}}}
\put(1621,434){\makebox(0,0)[lb]{\smash{{\SetFigFont{12}{14.4}{\rmdefault}{\mddefault}{\updefault}{\color[rgb]{0,0,0}3}%
}}}}
\put(946,434){\makebox(0,0)[lb]{\smash{{\SetFigFont{12}{14.4}{\rmdefault}{\mddefault}{\updefault}{\color[rgb]{0,0,0}2}%
}}}}
\put(2251,434){\makebox(0,0)[lb]{\smash{{\SetFigFont{12}{14.4}{\rmdefault}{\mddefault}{\updefault}{\color[rgb]{0,0,0}1}%
}}}}
\put(1621,-916){\makebox(0,0)[lb]{\smash{{\SetFigFont{12}{14.4}{\rmdefault}{\mddefault}{\updefault}{\color[rgb]{0,0,0}3}%
}}}}
\put(2251,-916){\makebox(0,0)[lb]{\smash{{\SetFigFont{12}{14.4}{\rmdefault}{\mddefault}{\updefault}{\color[rgb]{0,0,0}1}%
}}}}
\put(406,-916){\makebox(0,0)[lb]{\smash{{\SetFigFont{12}{14.4}{\rmdefault}{\mddefault}{\updefault}{\color[rgb]{0,0,0}1}%
}}}}
\put(2296,-376){\makebox(0,0)[lb]{\smash{{\SetFigFont{12}{14.4}{\rmdefault}{\mddefault}{\updefault}{\color[rgb]{0,0,0}4}%
}}}}
\put(316,-16){\makebox(0,0)[lb]{\smash{{\SetFigFont{12}{14.4}{\rmdefault}{\mddefault}{\updefault}{\color[rgb]{0,0,0}4}%
}}}}
\put(316,-466){\makebox(0,0)[lb]{\smash{{\SetFigFont{12}{14.4}{\rmdefault}{\mddefault}{\updefault}{\color[rgb]{0,0,0}5}%
}}}}
\put(2296,-16){\makebox(0,0)[lb]{\smash{{\SetFigFont{12}{14.4}{\rmdefault}{\mddefault}{\updefault}{\color[rgb]{0,0,0}5}%
}}}}
\put(1306, 74){\makebox(0,0)[lb]{\smash{{\SetFigFont{12}{14.4}{\rmdefault}{\mddefault}{\updefault}{\color[rgb]{0,0,0}7}%
}}}}
\put(1216,-286){\makebox(0,0)[lb]{\smash{{\SetFigFont{12}{14.4}{\rmdefault}{\mddefault}{\updefault}{\color[rgb]{0,0,0}6}%
}}}}
\put(946,-916){\makebox(0,0)[lb]{\smash{{\SetFigFont{12}{14.4}{\rmdefault}{\mddefault}{\updefault}{\color[rgb]{0,0,0}2}%
}}}}
\end{picture}%
\end{center}
\caption{La bouteille de Klein\label{cons:figklein}}
\end{figure}
Le plongement tendu de $\mathbb K^2_2$ dans $\R^5$ va s'obtenir en plaçant
tous les sommets sauf un sur un bord, puis en appliquant la 
remarque~\ref{cons:trou2}.
Le graphe complet $K_6$ se plonge dans la bouteille de Klein, par exemple
comme sur la figure~\ref{cons:figklein}. Ce plongement découpe la surface
en 10 triangles et un hexagone ayant deux sommets identiques. 
En découpant le pentagone 14536, on obtient une triangulation avec 5 sommets 
sur le bord de la surface mais on crée une arête 36 qui apparaît déjà 
ailleurs, on ne peut donc pas construire directement un plongement de
cette manière. On ajoute donc un sommet 7 dans le triangle 346 de sorte
que le bord du quadrilatère 3467 ne rencontre pas d'autre arête. La
remarque~\ref{cons:trou2} permet de placer le sommet~2 sur un autre bord.

Pour $\mathbb K^2_p$, $p\geq3$, on fait encore appel à la 
proposition~\ref{cons:trou}.

\subsubsection{La surface de caractéristique -1}
En notant $S$ la surface close de caractéristique d'Euler égale à -1,
un plongement tendu de la surface $S_1$ peut s'obtenir à partir de 
$\mathbb T^2_1$ par adjonction d'une anse non orientable.

Le cas de $S_2$ se base sur un plongement de $K_7$ dans $S$ 
(figure~\ref{cons:fign3}, inspirée de la figure~4, p.~20 de \cite{ku95})
\begin{figure}[h]
\begin{center}
\begin{picture}(0,0)%
\includegraphics{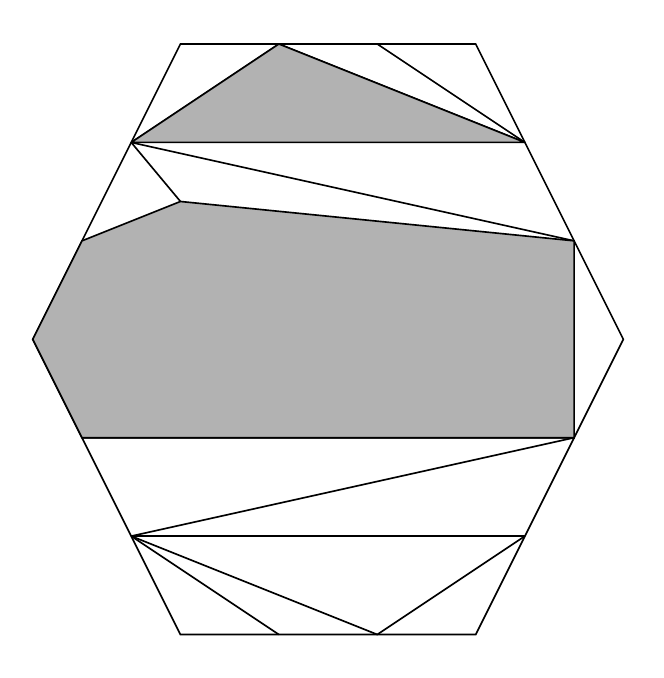}%
\end{picture}%
\setlength{\unitlength}{4144sp}%
\begingroup\makeatletter\ifx\SetFigFont\undefined%
\gdef\SetFigFont#1#2#3#4#5{%
  \reset@font\fontsize{#1}{#2pt}%
  \fontfamily{#3}\fontseries{#4}\fontshape{#5}%
  \selectfont}%
\fi\endgroup%
\begin{picture}(3017,3090)(301,-2281)
\put(316,-826){\makebox(0,0)[lb]{\smash{{\SetFigFont{12}{14.4}{\rmdefault}{\mddefault}{\updefault}{\color[rgb]{0,0,0}1}%
}}}}
\put(3196,-826){\makebox(0,0)[lb]{\smash{{\SetFigFont{12}{14.4}{\rmdefault}{\mddefault}{\updefault}{\color[rgb]{0,0,0}1}%
}}}}
\put(1081,659){\makebox(0,0)[lb]{\smash{{\SetFigFont{12}{14.4}{\rmdefault}{\mddefault}{\updefault}{\color[rgb]{0,0,0}1}%
}}}}
\put(2431,659){\makebox(0,0)[lb]{\smash{{\SetFigFont{12}{14.4}{\rmdefault}{\mddefault}{\updefault}{\color[rgb]{0,0,0}1}%
}}}}
\put(2431,-2266){\makebox(0,0)[lb]{\smash{{\SetFigFont{12}{14.4}{\rmdefault}{\mddefault}{\updefault}{\color[rgb]{0,0,0}1}%
}}}}
\put(766,164){\makebox(0,0)[lb]{\smash{{\SetFigFont{12}{14.4}{\rmdefault}{\mddefault}{\updefault}{\color[rgb]{0,0,0}2}%
}}}}
\put(766,-1726){\makebox(0,0)[lb]{\smash{{\SetFigFont{12}{14.4}{\rmdefault}{\mddefault}{\updefault}{\color[rgb]{0,0,0}3}%
}}}}
\put(2746,-1726){\makebox(0,0)[lb]{\smash{{\SetFigFont{12}{14.4}{\rmdefault}{\mddefault}{\updefault}{\color[rgb]{0,0,0}4}%
}}}}
\put(1531,659){\makebox(0,0)[lb]{\smash{{\SetFigFont{12}{14.4}{\rmdefault}{\mddefault}{\updefault}{\color[rgb]{0,0,0}4}%
}}}}
\put(1981,659){\makebox(0,0)[lb]{\smash{{\SetFigFont{12}{14.4}{\rmdefault}{\mddefault}{\updefault}{\color[rgb]{0,0,0}5}%
}}}}
\put(2971,-1276){\makebox(0,0)[lb]{\smash{{\SetFigFont{12}{14.4}{\rmdefault}{\mddefault}{\updefault}{\color[rgb]{0,0,0}5}%
}}}}
\put(2746,164){\makebox(0,0)[lb]{\smash{{\SetFigFont{12}{14.4}{\rmdefault}{\mddefault}{\updefault}{\color[rgb]{0,0,0}6}%
}}}}
\put(1486,-2266){\makebox(0,0)[lb]{\smash{{\SetFigFont{12}{14.4}{\rmdefault}{\mddefault}{\updefault}{\color[rgb]{0,0,0}6}%
}}}}
\put(1981,-2266){\makebox(0,0)[lb]{\smash{{\SetFigFont{12}{14.4}{\rmdefault}{\mddefault}{\updefault}{\color[rgb]{0,0,0}7}%
}}}}
\put(2971,-286){\makebox(0,0)[lb]{\smash{{\SetFigFont{12}{14.4}{\rmdefault}{\mddefault}{\updefault}{\color[rgb]{0,0,0}7}%
}}}}
\put(1126,-61){\makebox(0,0)[lb]{\smash{{\SetFigFont{12}{14.4}{\rmdefault}{\mddefault}{\updefault}{\color[rgb]{0,0,0}8}%
}}}}
\put(541,-331){\makebox(0,0)[lb]{\smash{{\SetFigFont{12}{14.4}{\rmdefault}{\mddefault}{\updefault}{\color[rgb]{0,0,0}3}%
}}}}
\put(1081,-2266){\makebox(0,0)[lb]{\smash{{\SetFigFont{12}{14.4}{\rmdefault}{\mddefault}{\updefault}{\color[rgb]{0,0,0}1}%
}}}}
\put(541,-1276){\makebox(0,0)[lb]{\smash{{\SetFigFont{12}{14.4}{\rmdefault}{\mddefault}{\updefault}{\color[rgb]{0,0,0}2}%
}}}}
\end{picture}%
\end{center}
\caption{La surface de caractéristique -1\label{cons:fign3}}
\end{figure}
Ce plongement découpe $S$ en 14 triangles et un hexagone ayant deux 
sommets identiques. On procède alors comme pour $\mathbb K^2_2$: on applique
la construction canonique à la surface $S$ privée du pentagone 12573, on
ajoute un sommet dans le triangle 237 de manière à obtenir un plongement
et on enlève le triangle 246 de manière à ce que tous les sommets soient
sur le bord.

Le cas de $S_p$, $p\geq3$ se traite comme précédemment.

\subsubsection{La surface orientable de genre 2}

Pour traiter le cas de la surface $S=\mathbb T^2\#\mathbb T^2$, on va utiliser
une description combinatoire d'un plongement de $K_8$ dans cette surface
tirée de \cite{ri74} (section~2.2, p.~23). Cette description est donnée 
par la table~\ref{cons:table} qui se lit comme suit : chaque ligne commence
par l'indice d'un des sommets du graphe et donne ensuite la liste de ses
voisins ordonnée circulairement, le sens de l'ordre étant induit par 
l'orientation de la surface. 
\begin{table}[h]
\begin{center}
\renewcommand{\arraystretch}{1.5}
\begin{tabular}{ccccccccc}
0.&2&7&3&1&4&5&6\\
2.&4&1&5&3&6&7&0\\
4.&6&3&7&5&0&1&2\\
6.&0&5&2&7&2&3&4\\
1.&7&6&5&2&4&0&3\\
3.&1&0&7&4&6&2&5\\
5.&3&2&1&6&0&4&7\\
7.&5&4&3&0&2&6&1
\end{tabular}
\medskip
\renewcommand{\arraystretch}{1}
\caption{Plongement du graphe $K_8$ dans $\mathbb T^2\#\mathbb T^2$}
\label{cons:table}
\end{center}
\end{table}

On peut vérifier que ce plongement contient deux quadrilatères, de sommets
0246 et 1357, les autres faces étant des triangles. Cette remarque permet
de conclure immédiatement pour les surfaces $S_p$, $p\geq2$: si on enlève
les deux quadrilatères, on obtient la surface $S_2$, qui est alors triangulée
par le graphe $K_8$, tous les sommets étant sur le bord. On peut alors
utiliser la construction canonique. Les valeurs supérieures de $p$ sont 
couvertes par la proposition~\ref{cons:trou}.

Passons maintenant au cas $p=1$. Le sommet~0 est adjacent à six triangles
et au quadrilatère 0246. Si on envève ces 7 faces, deux problèmes
empêchent d'appliquer la construction canonique : d'une part, le bord obtenu 
est l'octogone 27314564, où le sommet~4 apparaît deux fois ; d'autre part,
il reste une face qui n'est pas triangulaire, le quadrilatère 1357.

Le premier problème se traite comme dans le cas de la bouteille de Klein :
on ajoute un sommet 8 dans le triangle 246 contenu dans le quadrilatère 
0246. Au lieu d'enlever le quadrilatère 0246 dans son ensemble, on enlève
seulement le quadrilatère 0286 qu'il contient.

 Le second problème n'est pas apparu avec les surfaces précédentes.
Pour le résoudre, on ajoute un sommet 9 à l'intérieur du quadrilatère 1357
de manière à le trianguler, et on enlève le triangle 139. Le plongement
tendu est obtenu en appliquant la construction canonique en dehors des
deux quadrilatères, on place le sommet~8 comme indiqué précédemment et
on ajoute le sommet~9 à l'intérieur du tétraèdre 1357. Le bord 
est alors l'ennéagone 273914568. Les sommets~1 à~7 sont des points 
extrêmes de l'enveloppe convexe et les sommets~8 et~9 sont bien dans 
l'enveloppe convexe de leurs voisins. On obtient donc bien un plongement
tendu.

\bibliographystyle{smfalpha}
\bibliography{polyedral}

\end{document}